\documentclass[12pt]{article}
\usepackage{amsthm, amsmath, amssymb}
\usepackage{enumerate}

\usepackage{color}
\definecolor{darkblue}{rgb}{0,0,0.6}
\definecolor{darkgreen}{rgb}{0,0.35,0}
\usepackage[letterpaper,breaklinks,pdftex,colorlinks,citecolor=darkgreen,linkcolor=darkblue]{hyperref}

\newtheorem{theorem}{Theorem}[section]
\newtheorem{conjecture}[theorem]{Conjecture}

\newtheoremstyle{parentheses}{\topsep}{\topsep}{\itshape}{}{}{}{ }{\thmnumber{(#2)}}
\theoremstyle{parentheses}
\newtheorem{claim}{}
\makeatletter\@addtoreset{claim}{theorem}\makeatother

\renewcommand{\d}{\backslash}

\DeclareMathOperator{\si}{si}
\DeclareMathOperator{\cl}{cl}
\DeclareMathOperator{\GF}{GF}
\DeclareMathOperator{\PG}{PG}
\DeclareMathOperator{\AG}{AG}
\DeclareMathOperator{\F}{\mathbb{F}}

\title{Representability of matroids with a large projective geometry minor\thanks{This research was partially supported by a grant from the Office of Naval Research [N00014-10-1-0851].}}
\author{Jim Geelen and Rohan Kapadia\footnote{Current address: Concordia University, Montr\'eal, Qu\'ebec, Canada} \\ Department of Combinatorics and Optimization \\ University of Waterloo, Waterloo, Ontario, Canada}

\date{December 20, 2012; revised December 8, 2014}

\begin{document}

\maketitle

\begin{abstract}
We prove that, for each prime power $q$, there is an integer $n$ such that if $M$ is a $3$-connected, representable matroid with a $\PG(n-1,q)$-minor and no
$U_{2,q^2+1}$-minor, then $M$ is representable over $\GF(q)$.
We also show that for $\ell \geq 2$, if $M$ is a $3$-connected, representable matroid of sufficiently high rank with no $U_{2,\ell+2}$-minor and $|E(M)| \geq 2\ell^{r(M)/2}$, then $M$ is representable over a field of order at most $\ell$.
\end{abstract}

\section{Introduction}

We recall that $\PG(n-1,q)$ is the rank-$n$ projective geometry over $\GF(q)$, the finite field of order $q$. We prove the following theorem.

\begin{theorem} \label{thm:mainpgtheorem}
For each prime power $q$ there is an integer $n$ such that if $M$ is a $3$-connected, representable matroid with a $\PG(n-1,q)$-minor, then either
\begin{itemize}
 \item $M$ has a $U_{2, q^2+1}$-minor, or
 \item $M$ is $\GF(q)$-representable.
\end{itemize}
\end{theorem}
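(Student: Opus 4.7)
The strategy is to take an arbitrary $\F$-representation of $M$ and show that it can be descended to a $\GF(q)$-representation, with the only obstruction being the creation of a $U_{2,q^2+1}$-minor. There are three main steps.

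First, identify $\GF(q)$ inside $\F$. For $n$ at least $4$, the projective geometry $\PG(n-1,q)$ satisfies Desargues's theorem, and the cross-ratios of its collinear $4$-tuples generate a coordinate subfield isomorphic to $\GF(q)$ in any field over which it is representable. Consequently $\F \supseteq \GF(q)$, and after a projective change of basis the restriction of the $\F$-representation to the $\PG(n-1,q)$-minor is the standard base-extension of the canonical $\GF(q)$-representation.

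Second, reduce to single-element extensions. Using $3$-connectivity, I would apply a splitter- or chain-theoretic argument to write $M$ as a sequence of $3$-connected single-element extensions and coextensions that maintain a large $\PG$-minor. The problem then reduces to the following: given a $\GF(q)$-representable $3$-connected matroid $M_0$ with a large $\PG(n',q)$-minor, and an $\F$-representable single-element extension $M_0'$ of $M_0$, show that either $M_0'$ is $\GF(q)$-representable or $M_0'$ has a $U_{2,q^2+1}$-minor. Whittle's stabilizer theorem guarantees that $\PG(n'-1,q)$, for $n'$ sufficiently large, is a stabilizer for $\GF(q)$-representable matroids, so the $\GF(q)$-rational placement of the new element $e$ is pinned down by its interaction with the $\PG$-minor; the only question is whether the $\F$-representation forces $e$ into a non-rational position.

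Third, and this is the main obstacle, show that any non-$\GF(q)$-rational placement of $e$ creates a $U_{2,q^2+1}$-minor. A single projection obtained by contracting a hyperplane of the $\PG$-minor to place $e$ on a line yields only $q+2$ collinear points, far short of the required $q^2+1$, so the difficulty is quantitative: we need to amplify a single non-rational element into an entire long line. I would attack this by passing to a rank-$3$ minor of $M$ in which $e$ coexists with a $\PG(2,q)$-plane, then iterating projections from different centres inside that plane. Each such projection places a distinct non-rational combination involving $e$ onto a common line of a further minor; the Galois-theoretic constraint that $\F$ properly contains $\GF(q)$ forces these combinations to be pairwise distinct points, and a careful accounting argument should produce $q^2+1$ points on a single rank-$2$ flat. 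Calibrating $n$ so that the $\PG$-minor survives all of the contractions used in this amplification, while still being large enough to invoke the stabilizer theorem in Step~2, is the quantitative crux of the proof.
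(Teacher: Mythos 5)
Your high-level plan---identify $\GF(q)$ inside $\F$, reduce to single-element extensions, then amplify a non-rational entry into a long line---matches the shape of the actual proof, but Steps~2 and~3 have genuine gaps. In Step~2 you invoke a stabilizer theorem, but stabilizers only control uniqueness of representations over a \emph{fixed} field; what is needed here is to take an arbitrary $\F$-representation for an unknown extension $\F$ of $\GF(q)$ and force its entries into $\GF(q)$. That is the strictly stronger notion of \emph{confinement} introduced by Pendavingh and Van Zwam, whose theorem is precisely what reduces the problem to a $3$-connected minor $M'$ in which $\PG(n-1,q)$ is a single-element deletion, contraction, or both. A chain or splitter argument preserving a large $\PG$-minor plus a stabilizer theorem does not on its own give this reduction, and you would still face the question of representations over a varying $\F$.

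The more serious gap is Step~3, which you correctly identify as the crux but do not carry out. Your plan to ``pass to a rank-$3$ minor in which $e$ coexists with a $\PG(2,q)$-plane, then iterate projections'' begs the key question: having a single non-$\GF(q)$-rational entry somewhere in an $\F$-representation gives no guarantee whatsoever that, after contracting down to rank $3$, an offending element survives as a simple non-rational extension of a $\PG(2,q)$-plane. Producing such a minor is exactly the hard part, and ``iterating projections'' together with a vague ``Galois-theoretic constraint'' does not accomplish it. The paper does it with a Ramsey-theoretic argument: after setting up so that $\PG(n-1,q) \cong M'/x$ (or $M'/x \backslash y$), it colours an $\AG(n-1,q)$-restriction by the entries in row $x$; if $q^2+1$ pairwise non-parallel columns appear one gets $U_{2,q^2+1}$ outright, and otherwise the Affine Ramsey Theorem (a corollary of Hales--Jewett) yields a monochromatic $\AG(3,q)$-restriction $Y$. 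Tutte's Linking Theorem then connects two elements $f,g$ with $A_{xf}^{-1}A_{xg} \notin \GF(q)$ to $Y$, and contracting $f$ finally produces the rank-$3$ minor that is a simple one-element extension of $\PG(2,q)$, to which the elementary observation you were aiming for applies. Crucially, $n$ is chosen solely to satisfy this Ramsey bound, not via a stabilizer or splitter calibration; without the Ramsey ingredient your ``careful accounting argument'' is a wish rather than a proof.
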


Note that $U_{2, q^2+1}$ is the longest line representable over $\GF(q^2)$.
In the $q = 2$ case, a precise version of \autoref{thm:mainpgtheorem} is known.

\begin{theorem}[Semple, Whittle, \cite{SempleWhittle}] \label{thm:binarycase}
If $M$ is a $3$-connected, representable matroid with a $\PG(2,2)$-minor and no $U_{2,5}$-minor, then $M$ is binary.
\end{theorem}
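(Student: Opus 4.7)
My plan is to build $M$ from its Fano minor by single-element moves and, at each step, show that any non-binary extension creates the forbidden $U_{2,5}$-minor.

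First, I restrict the field. Since $\PG(2,2)$ is only representable in characteristic $2$, any field $\mathbb{F}$ over which $M$ is representable has $\operatorname{char}(\mathbb{F}) = 2$; replacing $\mathbb{F}$ by the subfield generated by the entries of a fixed representing matrix, we may take $\mathbb{F} = \GF(2^k)$. The goal is to upgrade this to a $\GF(2)$-representation.

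Second, I apply Seymour's Splitter Theorem. Since $\PG(2,2)$ is neither a wheel nor a whirl, there is a chain $F_7 = N_0, N_1, \dots, N_t = M$ of $3$-connected minors of $M$ with $|E(N_{i+1})| = |E(N_i)| + 1$, each $N_{i+1}$ a single-element extension or coextension of $N_i$ and inheriting all of the hypotheses ($\GF(2^k)$-representability, a Fano minor, no $U_{2,5}$-minor). By induction on $i$, I claim that every $N_i$ is binary. The base case is trivial and, because binariness is self-dual, the inductive step reduces to the following claim: \emph{if $N$ is a $3$-connected binary matroid with a Fano minor and $N^+$ is a $3$-connected $\GF(2^k)$-representable single-element extension of $N$ without a $U_{2,5}$-minor, then $N^+$ is binary.}

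To prove this, suppose $N^+$ is not binary. The Fano plane is a strong stabilizer for $\GF(2^k)$-representable matroids, so any $\GF(2^k)$-representation of $N$ is, up to a Frobenius twist and column scaling, its canonical $\GF(2)$-representation embedded in $\GF(2^k)$. Fix such a representation $A$ of $N$; the column $v \in \GF(2^k)^{r(N)}$ representing the new element $e$ must then have at least one entry in $\GF(2^k) \setminus \GF(2)$. Choosing a Fano minor of $N$ whose coordinates still detect the non-binary part of $v$ and in whose closure $e$ lies, I obtain a non-binary $\GF(2^k)$-extension $F_7 + e$ as a minor of $N^+$. The key geometric claim is that contracting $e$ in $F_7 + e$ merges exactly one Fano line into a single parallel class and leaves the other four Fano points in distinct parallel classes; the simplification of $(F_7 + e)/e$ is then $U_{2,5}$, a minor of $N^+$ contradicting the hypothesis.

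The main obstacle is the geometric claim. Two ingredients need care. First, I must show that a Fano minor with the two required properties exists: the non-binariness of $v$ might be ``hidden'' in coordinates contracted by a naive choice of Fano minor, so the selection must be made carefully in view of $v$'s support, and it may be that one has to replace the chosen Fano by a projectively equivalent one inside $N$ to match $v$'s non-binary coordinate. Second, I must verify that the projection from $e$ collapses at most one Fano triple: this uses the fact that two distinct Fano lines meet in a single point, so if $v$ lay in the $\GF(2^k)$-span of two distinct Fano lines, $v$ would be a scalar multiple of their common Fano point, contradicting the $3$-connectivity of $N^+$. A subsidiary issue is justifying the strong-stabilizer property of the Fano for arbitrary $\GF(2^k)$; this is classical for $k = 2$, and for larger $k$ one can reduce to $\GF(4)$ by first restricting the representation to the subfield of $\GF(2^k)$ generated by the entries of the columns of a fixed Fano sub-representation.
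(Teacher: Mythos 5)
Your proposal takes a genuinely different route from the paper, which does not prove this statement directly at all: it simply derives it from the Semple--Whittle excluded-minor theorem (a $3$-connected, representable matroid with neither a $U_{2,5}$- nor a $U_{3,5}$-minor is binary or ternary), together with Oxley's result that a $3$-connected matroid of corank at least three with no $U_{2,5}$-minor also has no $U_{3,5}$-minor, and the observation that $\PG(2,2)$ is not ternary. By contrast, you attempt a self-contained Splitter-Theorem induction; this is closer in spirit to the proof of the general Theorem~\ref{thm:mainpgtheorem} in Section~\ref{sec:proofofmainpgtheorem}, but it is not what the paper does for the $q=2$ case.

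There are two substantive gaps in the proposal as written. First, the duality reduction does not go through as stated: ``binariness is self-dual'' is true, but your hypotheses are not. The dual of $F_7$ is $F_7^*$, not $F_7$, and the dual of having no $U_{2,5}$-minor is having no $U_{3,5}$-minor. So dualizing a coextension step gives an extension step for which you only know ``$F_7^*$-minor, no $U_{3,5}$-minor,'' which is not the claim you prove. You would need either to invoke Oxley's theorem (as the paper does) to get the forbidden $U_{3,5}$-minor for free from corank $\geq 3$ and no $U_{2,5}$-minor, or to give a separate argument for the coextension case (which, in the language of the paper's general proof, corresponds to the more delicate $N = M'/x$ and $N = M'/x\backslash y$ cases). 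Second, and more seriously, the step where you ``choose a Fano minor of $N$ whose coordinates still detect the non-binary part of $v$ and in whose closure $e$ lies'' is precisely the technical heart of the matter and is not resolved. After contracting to reach the Fano minor, the column of $e$ may well become parallel to a $\GF(2)$-vector, or zero, even though $v$ itself is genuinely non-binary; you acknowledge this obstacle but do not show how to overcome it. This is exactly the difficulty that Tutte's Linking Theorem and the Hales--Jewett argument are used to circumvent in the proof of Theorem~\ref{thm:mainpgtheorem}, and one cannot simply assume it away for $q = 2$. A smaller point: you do not need a ``strong stabilizer'' property of $F_7$ here; the relevant classical fact is that a $3$-connected binary matroid is uniquely representable over every field over which it is representable, so its $\GF(2^k)$-representation is necessarily a scaled $\GF(2)$-matrix. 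Invoking stabilizers over $\GF(2^k)$ is both unnecessary and unsupported as stated.
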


Semple and Whittle proved that a $3$-connected, representable matroid with no $U_{2,5}$- or $U_{3,5}$-minor is binary or ternary. \autoref{thm:binarycase} follows from this result along with the fact that $\PG(2,2)$ is not ternary and a result of Oxley \cite{Oxley:acharacterization} stating that a $3$-connected matroid of corank at least three with no $U_{2,5}$-minor
has no $U_{3,5}$-minor.

In the next section we exhibit counterexamples to the stronger version of \autoref{thm:mainpgtheorem} where the assumption of representability is dropped. These matroids are vertically $4$-connected and have no $U_{2,q+3}$-minor. However, they have $U_{2,q+2}$-minors, and we conjecture the following.

\begin{conjecture}
If $q$ is a prime power and $M$ is a vertically $4$-connected matroid with a $\PG(2,q)$-minor and no $U_{2, q+2}$-minor, then $M$ is $\GF(q)$-representable. 
\end{conjecture}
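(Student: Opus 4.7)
The natural line of attack is by contradiction: let $M$ be a vertically $4$-connected matroid with a $\PG(2,q)$-minor and no $U_{2,q+2}$-minor that fails to be $\GF(q)$-representable, chosen with $|E(M)|$ minimum. My first move is to find an element $e \in E(M)$ such that either $M\setminus e$ or $M/e$ preserves all three hypotheses, and hence by minimality is $\GF(q)$-representable. Vertical $4$-connectivity provides plenty of room to find such an $e$, though as in typical splitter-style arguments one may need to handle small, highly structured configurations separately---in particular, cases where every single-element removal destroys either the $\PG(2,q)$-minor or the connectivity.

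The next step is to extend the $\GF(q)$-representation of $M\setminus e$ or $M/e$ back to $M$. The absence of a $U_{2,q+2}$-minor forces every line to have at most $q+1$ points, which is precisely the line length in $\GF(q)$-representable matroids, so the combinatorial data of $e$ already ``looks $\GF(q)$-linear''. One would then hope for a stabilizer-type theorem, in the spirit of Whittle, asserting that the $\PG(2,q)$-minor rigidly determines the extension and leaves no non-$\GF(q)$ placement of $e$ consistent with vertical $4$-connectivity. An alternative and more ambitious route would be to first show that the hypotheses imply a $\PG(n-1,q)$-minor for arbitrarily large $n$---using vertical $4$-connectivity to propagate $\GF(q)$-linear structure upward through the rank---then bootstrap to representability over some field, and finally invoke \autoref{thm:mainpgtheorem}.

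The principal obstacle is precisely the removal of the representability hypothesis from \autoref{thm:mainpgtheorem}. For $q=2$ the conjecture is essentially immediate since Tutte's excluded-minor characterization yields binarity from the absence of a $U_{2,4}$-minor alone; but for $q \geq 3$ one must rule out $\GF(q)$-excluded minors such as $F_7$ (when $q=3$) from only the bare assumptions of vertical $4$-connectivity, a $\PG(2,q)$-minor, and no long line. The tools in the present paper lean heavily on representability, and I expect any full proof will require genuinely new structural results for non-representable matroids of high connectivity containing a projective-plane minor---which is presumably why the statement remains conjectural.
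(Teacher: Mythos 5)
This statement is labelled a \emph{Conjecture} in the paper; the authors offer no proof, and indeed the preceding section constructs vertically $4$-connected non-representable matroids with a $\PG(n-1,q)$-minor and no $U_{2,q+3}$-minor (the matroids $M_n'$), which shows that a weakening to the next longer line fails and suggests why the conjecture is delicate. There is therefore no ``paper proof'' to compare against.

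Your proposal is an outline of an approach rather than a proof, and you candidly identify the gap yourself. To make it concrete: the step that does not currently exist is the ``stabilizer-type theorem'' you invoke. The confinement/stabilizer machinery of Pendavingh--Van Zwam and Whittle, which drives \autoref{thm:mainpgtheorem}, operates on matroids already assumed representable over a field $\F$ extending $\GF(q)$; its conclusion is that an $\F$-representation is a scaled $\GF(q)$-matrix. Without an ambient field there is no representation to rigidify, so the final paragraph of your first step (``extend the $\GF(q)$-representation of $M\setminus e$ or $M/e$ back to $M$'') has no known tool behind it. Moreover, the inductive setup itself is unverified: it is not established that a minimal counterexample has an element $e$ with $M\setminus e$ or $M/e$ still vertically $4$-connected with a $\PG(2,q)$-minor, and typical chain/splitter theorems degrade vertical $4$-connectivity to weaker connectivity, which reopens the case analysis. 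Your remark about the $q=2$ case is correct but tangential (Tutte's excluded-minor theorem already gives binarity from the absence of $U_{2,4}$ alone), and your alternative route via building up a large $\PG(n-1,q)$-minor and then appealing to \autoref{thm:mainpgtheorem} still requires establishing representability of $M$ over \emph{some} field first, which is exactly the missing ingredient. In short, the proposal reiterates why the problem is open rather than resolving it.
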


For a matroid $M$, we denote the simplification of $M$ by $\si(M)$ and we let $\varepsilon(M) = |E(\si(M))|$.
For a class of matroids $\mathcal{M}$ and positive integer $k$, we define $g_{\mathcal{M}}(k) = \max\{\varepsilon(M) : M \in \mathcal{M}, r(M) = k\}$
or say $g_{\mathcal{M}}(k) = \infty$ when this maximum does not exist. The function $g_\mathcal{M}$ is called the \emph{growth-rate function} of $\mathcal{M}$.
We let $\mathcal{U}_\ell$ denote the class of matroids with no $U_{2,\ell+2}$-minor. A theorem of Geelen and Nelson \cite{GeelenNelson} asserts that, for sufficiently large $k$, $g_{\mathcal{U}_\ell}(k) = (q^k - 1)/(q-1)$ where $q$ is the largest prime power less than or equal to $\ell$, and equality is achieved only by the projective geometry $\PG(k-1,q)$.
Thus, for large $k$, the rank-$k$ matroids in $\mathcal{U}_\ell$ with the maximum number of points are representable over a field of order at most $\ell$.
We prove the following extension of this fact as a corollary of \autoref{thm:mainpgtheorem} and a result of Geelen and Kabell \cite{GeelenKabell}.

\begin{theorem} \label{thm:growthrateapplication0}
For any positive integer $\ell$, there is an integer $k$ so that if $M$ is a $3$-connected, representable matroid of rank at least $k$ with no $U_{2,\ell+2}$-minor and $|E(M)| \geq 2\ell^{r(M)/2}$, then $M$ is representable over a field of order at most $\ell$.
\end{theorem}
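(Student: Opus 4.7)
The strategy is to combine \autoref{thm:mainpgtheorem} with a density-to-structure theorem of Geelen and Kabell \cite{GeelenKabell}. The role of Geelen--Kabell is to convert the density hypothesis $|E(M)| \geq 2\ell^{r(M)/2}$ into the existence of an arbitrarily large projective geometry minor over some prime power at most $\ell$, and \autoref{thm:mainpgtheorem} then upgrades this minor into $\GF(q)$-representability of $M$ itself.

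In more detail: for each prime power $q \leq \ell$, let $n_q$ be the integer supplied by \autoref{thm:mainpgtheorem}, and set $n = \max n_q$ over the finite set of prime powers at most $\ell$. By the Geelen--Kabell theorem there is an integer $k = k(\ell, n)$ such that every matroid $M$ with $r(M) \geq k$, no $U_{2,\ell+2}$-minor, and $\varepsilon(M) \geq 2\ell^{r(M)/2}$ contains a $\PG(n-1, q)$-minor for some prime power $q \leq \ell$; take this $k$ in the statement of the theorem. Since $n \geq n_q$, $M$ has a $\PG(n_q - 1, q)$-minor, so \autoref{thm:mainpgtheorem} will yield $\GF(q)$-representability of $M$ as soon as we check that $M$ has no $U_{2, q^2+1}$-minor. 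This follows from the hypothesis no $U_{2, \ell+2}$-minor whenever $q^2 \geq \ell + 1$, equivalently $q > \sqrt{\ell}$.

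The main obstacle is therefore to arrange that the prime power $q$ supplied by Geelen--Kabell exceeds $\sqrt{\ell}$. If Geelen--Kabell delivers the largest prime power $\tilde q \leq \ell$, then by Bertrand's postulate $\tilde q \geq (\ell+1)/2$, so $\tilde q^2 \geq (\ell+1)^2/4 > \ell$ for $\ell \geq 3$; the handful of smaller cases can be checked by hand. Everything else, including the use of $3$-connectivity (which is needed only to invoke \autoref{thm:mainpgtheorem} on $M$ itself) and the choice of $k$, is routine once the density bound has been converted into a projective geometry minor of the appropriate size.
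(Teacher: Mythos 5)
Your overall two-step strategy---use the Geelen--Kabell density theorem (\autoref{thm:densematroidhaspgminor}) to extract a large projective geometry minor, then upgrade to $\GF(q)$-representability via \autoref{thm:mainpgtheorem}---matches the paper's plan (the paper proves the more explicit \autoref{thm:growthrateapplication}). However, there is a genuine gap in how you control the prime power $q$ of the projective geometry.

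\autoref{thm:densematroidhaspgminor} takes as input an integer $q_0 \geq 2$ and produces a constant $c$ such that any matroid with no $U_{2,\ell+2}$-minor and $\varepsilon(M) \geq cq_0^{r(M)}$ has a $\PG(n-1,q)$-minor for \emph{some} prime power $q > q_0$. It does not locate the projective geometry over the largest available field. Your argument never fixes $q_0$, and the key step ``if Geelen--Kabell delivers the largest prime power $\tilde q \leq \ell$, then by Bertrand's postulate\ldots'' assumes a conclusion the theorem does not supply: there is no reason that $q$ should be the largest prime power at most $\ell$. (The bound $q \leq \ell$ does not come from Geelen--Kabell either; it follows because $\PG(n-1,q)$ contains a $(q+1)$-point line, so having no $U_{2,\ell+2}$-minor forces $q \leq \ell$.) The paper's fix is to choose $q_0$ to be a prime power near $\sqrt{\ell}$---specifically, the smallest prime power that is at least $\sqrt{\ell}$---so that the Geelen--Kabell conclusion $q > q_0$ directly yields $q > \sqrt{\ell}$, hence $q^2 + 1 \geq \ell + 2$ and therefore no $U_{2,q^2+1}$-minor. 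With that choice Bertrand's postulate plays no role, and the argument becomes a clean chain: density plus Geelen--Kabell gives a $\PG(n-1,q)$-minor with $q > q_0 \geq \sqrt{\ell}$; no $U_{2,\ell+2}$-minor gives $q \leq \ell$; $q > \sqrt{\ell}$ gives no $U_{2,q^2+1}$-minor; and \autoref{thm:mainpgtheorem} gives $\GF(q)$-representability.
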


\section{Growth rates}

We prove \autoref{thm:mainpgtheorem} in Sections~\ref{sec:representationoverasubfield} and \ref{sec:proofofmainpgtheorem}; in this section we derive \autoref{thm:growthrateapplication0} from it and also provide an example that motivates the assumption of representability in both Theorems~\ref{thm:mainpgtheorem} and \ref{thm:growthrateapplication0}.
We need the following result.

\begin{theorem}[Geelen, Kabell, \cite{GeelenKabell}] \label{thm:densematroidhaspgminor}
For all integers $\ell, q_0 \geq 2$ and $n$, there exists an integer $c$ such that if $M$ is a matroid with no $U_{2,\ell+2}$-minor and $\varepsilon(M) \geq cq_0^{r(M)}$, then $M$ has a $\PG(n-1,q)$-minor for some prime power $q > q_0$.
\end{theorem}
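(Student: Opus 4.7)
The plan is to induct on $n$, exploiting the interplay between density and contraction. The key observation is that for any non-loop $e$ of a matroid $M$ with no $U_{2,\ell+2}$-minor, the parallel classes of $M/e$ correspond to lines of $M$ through $e$, each of size at most $\ell+1$; hence $\varepsilon(M/e) \geq \varepsilon(M)/(\ell+1)$. Thus density drops by at most a factor of $\ell+1$ per contraction while the rank drops by $1$, so starting from $\varepsilon(M) \geq c q_0^{r(M)}$ with $c$ sufficiently large, a sequence of contractions yields minors at each smaller rank that remain dense relative to the same $q_0^{(\cdot)}$ threshold (with a constant depending on how many contractions are performed).

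For the base case $n = 2$, we must produce a $U_{2,q+1}$-minor for some prime power $q > q_0$, i.e., a line of at least $q_0+2$ points in some minor. If no such line exists, then every minor of $M$ has all lines of size at most $q_0+1$, and the contraction inequality applied inductively with $\ell$ replaced by $q_0$ gives $\varepsilon(M) \leq q_0^{r(M)}$, contradicting the hypothesis once $c > 1$. For the inductive step, apply the hypothesis for $n-1$ (with constant $c_{n-1}$ chosen much larger than $c_n$) to locate a $\PG(n-2,q)$-minor $N$ for some prime power $q_0 < q \leq \ell$; the upper bound $q \leq \ell$ holds because any $\PG(n-2,q)$ contains $U_{2,q+1}$. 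We then extend $N$ to a $\PG(n-1,q)$-minor by analyzing the elements of $M$ outside $\cl_M(E(N))$: density forces many such elements to lie on long lines meeting $N$, and after suitable deletions and contractions these can be assembled into an additional ``row'' of $q^{n-1}$ new projective-geometry points glued onto $N$.

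\textbf{Main obstacle.} The chief difficulty is synchronizing the prime power $q$: nested applications of the inductive hypothesis might naturally produce $\PG$-minors over different fields, and only one such field can furnish the final $\PG(n-1,q)$. This is handled by exploiting that the possible values of $q$ form a finite set (prime powers in the interval $(q_0, \ell]$), so a Ramsey-style pruning over the many candidate ``extension'' elements yields a subcollection where $q$ is constant, at the cost of enlarging $c$. A secondary obstacle is the actual assembly of extra points into a $\PG$-shaped configuration rather than merely a dense rank-$n$ minor: this requires not just any dense extension of $N$, but one whose line-lengths through $N$ match the structure of $\PG(n-1,q)$, which again is forced by density once $c$ is taken large enough relative to $c_{n-1}$, $\ell$, $q_0$, and $|\PG(n-1,\ell)|$.
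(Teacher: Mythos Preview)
The paper does not prove this theorem at all; it is quoted from \cite{GeelenKabell} and used as a black box to derive \autoref{thm:growthrateapplication}. There is therefore no proof in the paper to compare your proposal against.

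That said, your sketch has a genuine gap. The density inequality $\varepsilon(M/e)\ge \varepsilon(M)/(\ell+1)$ and the base-case Kung-type bound are correct and standard, but the inductive step is where essentially all the work in the Geelen--Kabell paper lies, and you have not supplied it. Writing that ``density forces many such elements to lie on long lines meeting $N$, and after suitable deletions and contractions these can be assembled into an additional `row' of $q^{n-1}$ new projective-geometry points glued onto $N$'' is a restatement of what needs to be proved, not an argument. Concretely: having a $\PG(n-2,q)$-minor $N$ and many points outside $\cl_M(E(N))$ does not by itself let you extend $N$ to $\PG(n-1,q)$; you need those outside points to project consistently onto $N$ so that the resulting rank-$n$ minor has \emph{every} line of length $q+1$, not merely high density. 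Arranging this requires the machinery of ``long lines'' and iterated refinement that forms the body of \cite{GeelenKabell}, together with an argument that the field size $q$ can be held fixed throughout. Your ``Ramsey-style pruning'' remark gestures at the latter issue but does not address the former.

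There is also a smaller issue in your base case: finding a $U_{2,q_0+2}$-minor is not the same as finding $\PG(1,q)=U_{2,q+1}$ for a \emph{prime power} $q>q_0$, since $q_0+1$ need not be a prime power. This is repairable (replace $q_0$ by the largest prime power $\le q_0$ and invoke Kung's theorem, or use Bertrand's postulate to locate a prime in a suitable range after strengthening the density hypothesis), but as written it is incomplete.
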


We prove the following stronger version of \autoref{thm:growthrateapplication0}.

\begin{theorem} \label{thm:growthrateapplication}
Let $\ell \geq 2$ and $q_0$ the smallest prime power greater than or equal to $\sqrt{\ell}$.
There is an integer $c$ such that if $M$ is a $3$-connected, representable matroid with no $U_{2,\ell+2}$-minor and $|E(M)| \geq cq_0^{r(M)}$, then $M$ is $\GF(q)$-representable for some prime power $q \leq \ell$.
\end{theorem}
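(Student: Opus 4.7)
The plan is to combine Theorem~\ref{thm:mainpgtheorem} with Theorem~\ref{thm:densematroidhaspgminor} directly. If $M$ is sufficiently dense relative to $q_0^{r(M)}$, then Theorem~\ref{thm:densematroidhaspgminor} produces a large $\PG(n-1,q)$-minor of $M$ for some prime power $q > q_0$. The no-$U_{2,\ell+2}$-minor hypothesis then sandwiches $q$ between $q_0$ and $\ell$, and the inequality $q > q_0 \geq \sqrt{\ell}$ forces $q^2+1 \geq \ell+2$, so Theorem~\ref{thm:mainpgtheorem} becomes applicable and yields $\GF(q)$-representability.

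In more detail, I would first observe that there are only finitely many prime powers $q$ in the interval $(q_0, \ell]$. For each such $q$, Theorem~\ref{thm:mainpgtheorem} furnishes a threshold $n_q$, and I would take $n$ to be the maximum of these. Feeding the triple $\ell, q_0, n$ into Theorem~\ref{thm:densematroidhaspgminor} yields the constant $c$ claimed by the theorem. Given $M$ as in the statement, $3$-connectedness gives $\varepsilon(M) = |E(M)| \geq c q_0^{r(M)}$, so $M$ has a $\PG(n-1, q)$-minor for some prime power $q > q_0$. A line of this minor is a $U_{2, q+1}$-minor of $M$, forcing $q \leq \ell$; and $q > \sqrt{\ell}$ yields $q^2 \geq \ell+1$, so $q^2+1 \geq \ell+2$ and $M$ also has no $U_{2, q^2+1}$-minor. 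Since $n \geq n_q$, Theorem~\ref{thm:mainpgtheorem} then applies with this $q$ and gives $\GF(q)$-representability.

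There is not really a substantial obstacle: the two deep inputs do all the work, and the derivation is essentially parameter-bookkeeping. The only content is the choice of $q_0$ as the \emph{smallest} prime power at least $\sqrt{\ell}$, which is tuned precisely so that any prime power $q > q_0$ that is compatible with the no-$U_{2,\ell+2}$-minor hypothesis simultaneously satisfies both $q \leq \ell$ (delivering the field-order conclusion) and $q^2+1 \geq \ell+2$ (eliminating the $U_{2,q^2+1}$-minor that would otherwise block the application of Theorem~\ref{thm:mainpgtheorem}).
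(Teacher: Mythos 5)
Your proof is correct and follows essentially the same route as the paper's: apply \autoref{thm:mainpgtheorem} over the relevant prime powers to get a uniform threshold $n$, feed $n$ into \autoref{thm:densematroidhaspgminor} to get $c$, and then use the no-$U_{2,\ell+2}$-minor hypothesis together with $q_0 \geq \sqrt{\ell}$ to sandwich the resulting $q$ so that \autoref{thm:mainpgtheorem} applies. The only differences are cosmetic (you spell out the $U_{2,q+1}$ line argument and the integer rounding $q^2 > \ell \Rightarrow q^2 \geq \ell + 1$, which the paper leaves implicit).
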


\begin{proof}
Applying \autoref{thm:mainpgtheorem} to every prime power $q \leq \ell$, we can choose an integer $n$ so that, for each such $q$, a $3$-connected, representable matroid with a $\PG(n-1,q)$-minor and no $U_{2,q^2+1}$-minor is representable over $\GF(q)$.
We choose $c$ as in \autoref{thm:densematroidhaspgminor} so that a matroid $M$ with no $U_{2,\ell+2}$-minor and $\varepsilon(M) \geq cq_0^{r(M)}$ has a $\PG(n-1,q)$-minor for some prime power $q > q_0$.
We let $M$ be a $3$-connected, representable matroid with no $U_{2,\ell+2}$-minor and $\varepsilon(M) \geq cq_0^{r(M)}$; then $M$ has a $\PG(n-1,q)$-minor for some prime power $q > q_0$. The fact that $M$ has no $U_{2,\ell+2}$-minor implies that $q \leq \ell$.
Also, $q > \sqrt{\ell}$ so $M$ has no $U_{2,q^2+1}$-minor. Thus by \autoref{thm:mainpgtheorem}, $M$ is $\GF(q)$-representable.
\end{proof}

For any prime power $q$, we exhibit a class of matroids which provide a counterexample to the stronger versions of both Theorems~\ref{thm:mainpgtheorem} and \ref{thm:growthrateapplication} where the assumptions of representability are dropped (for $\ell \geq 4$ in the case of \autoref{thm:growthrateapplication}).
We will use the following theorem of projective geometry, known as Pappus's Theorem (see \cite[Theorem 2.2.2]{BeutelspacherRosenbaum}).

\begin{theorem}
Let $L_1$ and $L_2$ be lines in a plane representable over a field, with distinct points $a,b,c \in L_1 \setminus L_2$ and $d,e,f \in L_2 \setminus L_1$. If $g, h$, and $i$ are points that are respectively the intersections of the lines spanned by $\{e, a\}$ and $\{f, b\}$, $\{d, a\}$ and $\{f, c\}$, and $\{d, b\}$ and $\{e, c\}$, then $g,h$, and $i$ are collinear.
\end{theorem}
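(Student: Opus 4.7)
The plan is to give a direct coordinate-based proof. Since the ambient plane is representable over a field $\F$, it embeds into $\PG(2,\F)$, where any two distinct lines meet; in particular $p := L_1 \cap L_2$ exists. I would use the projective freedom to choose a basis of $\F^3$ so that $p = (0,0,1)$, $a = (1,0,0)$, and $d = (0,1,0)$ in homogeneous coordinates. Then $L_1$ is the locus of points $[x,0,z]$ and $L_2$ the locus $[0,y,z]$, and rescaling representatives lets me write $b = (1,0,\beta)$, $c = (1,0,\gamma)$, $e = (0,1,\epsilon)$, $f = (0,1,\phi)$ for scalars $\beta,\gamma,\epsilon,\phi \in \F$. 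The distinctness hypotheses force $\beta,\gamma$ to be distinct and nonzero, and likewise for $\epsilon,\phi$.

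Second, I would compute the three intersection points using the cross product on $\F^3$: the projective line through $[u]$ and $[v]$ corresponds to the plane with normal $u \times v$, so two lines with normals $n_1$ and $n_2$ meet in the projective point $[n_1 \times n_2]$. Applying this yields $g = (a \times e) \times (b \times f)$, with analogous expressions for $h$ and $i$; each works out to an explicit vector in the four parameters $\beta,\gamma,\epsilon,\phi$.

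Finally, the three projective points $[g], [h], [i]$ are collinear if and only if the $3 \times 3$ determinant with rows $g, h, i$ vanishes. Expanding this determinant produces a polynomial in $\beta,\gamma,\epsilon,\phi$ in which every monomial appears with its negative and so cancels identically. The main obstacle is simply the bookkeeping in the symbolic determinant: the cancellations are not especially illuminating, but they are forced by the commutativity of $\F$, so no conceptual difficulty arises and the conclusion follows from elementary linear algebra over $\F$. An alternative organization would be to appeal to a Grassmann--Pl\"ucker identity to avoid the explicit expansion, but the direct calculation is short enough to be carried out by hand.
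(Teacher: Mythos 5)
Your proposal is a correct proof, but there is no argument in the paper to compare it against: the paper cites Pappus's Theorem as Theorem 2.2.2 of Beutelspacher and Rosenbaum and never reproduces a proof, since it is a standard fact of projective geometry. Your approach is the classical analytic one, and the computation does check out. After embedding the rank-$3$ representable matroid in $\PG(2,\F)$ and sending $p = L_1 \cap L_2$, $a$, $d$ to the standard basis, one may take $b = (1,0,\beta)$, $c = (1,0,\gamma)$, $e = (0,1,\epsilon)$, $f = (0,1,\phi)$; the iterated cross products give
\[
g = (\phi - \epsilon,\; -\beta,\; -\beta\epsilon), \qquad
h = (\phi,\; -\gamma,\; 0), \qquad
i = (\epsilon,\; \beta - \gamma,\; \beta\epsilon),
\]
and the determinant with these rows expands to $\beta\epsilon\gamma\phi - \beta\gamma\epsilon\phi = 0$, vanishing precisely because $\F$ is commutative. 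You are right to single out commutativity as the crux, since Pappus fails over non-commutative division rings. One small presentational point: the passage to the ambient projective plane $\PG(2,\F)$ should be stated explicitly, so that the auxiliary point $p$ and the cross-product descriptions of $g,h,i$ (whose existence the theorem statement takes as hypothesis) make sense even if those points are not elements of the given matroid; you do gesture at this and the step is unproblematic.
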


For each $n \geq 3$, we construct a rank-$(n+1)$ matroid that is $3$-connected, has a $\PG(n-1,q)$-minor, has no $U_{2,q+3}$-minor, and has more than $q^n$ points, but is not representable.

We recall that the rank-$n$ affine geometry $\AG(n-1, q)$ is obtained from $\PG(n-1, q)$ by deleting a hyperplane, and for any element $e$ of $\AG(n-1, q)$, $\si(\AG(n-1, q) / e) \cong \PG(n-2, q)$. 

For $n \geq 3$, we let $H$ be a hyperplane of $\PG(n,q)$, let $C$ be a circuit of size $n+1$ contained in $H$, let $M_n = \PG(n,q) \d (H \setminus C)$, and let $M_n'$ be the matroid obtained from $M_n$ by relaxing the circuit-hyperplane $C$.
For any element $e$ of $M_n'$, at least one of $M_n' \d e$ and $M_n' / e$ is $\GF(q)$-representable. In particular, if $e \not\in C$, then $\si(M_n' / e) = \si(M_n / e)
\cong \PG(n-1,q)$, and if $e \in C$, then $M_n' \d e = M_n \d e$. However, $M_n'$ has no $U_{2,q+3}$-minor, because any non-$\GF(q)$-representable, rank-$2$ minor $N$
of $M_n'$ is a restriction of $M_n' / X$ for some $X \subseteq C$ with $|X| = n - 1$ and so is a single-element extension of a rank-$2$ minor of $\PG(n,q)$.

We now show that $M_n'$ is not representable.
We choose a set $X \subseteq C$ with $|X| = |C| - 3$ and let $N = \si(M_n/X)$ and $N'= \si(M_n'/X)$. Then
$N'$ is obtained from $N$ by relaxing the circuit-hyperplane $C \setminus X$.
If $q = 2$, then $N \cong \PG(2,2)$ so $N'$ is isomorphic to the non-Fano matroid, and hence $M_n'$ is not representable.
If $q > 2$, we label the elements of $C \setminus X$ as $a, b$, and $c$, and we can choose a triangle $\{d,e,f\}$ of $N$ such that $a,b,c \not\in \cl_N(\{d,e,f\})$.
In addition, we can define $g,h$ and $i$ to be the elements of $N$ that respectively lie in $\cl_N(\{e,a\}) \cap \cl_N(\{f,b\})$, $\cl_N(\{d,a\}) \cap
\cl_N(\{f,c\})$, and $\cl_N(\{d,b\}) \cap \cl_N(\{e,c\})$.
We observe that $r_N(\{g,h,i\}) = 2$ by Pappus's Theorem.
Therefore, in $N'$ there are two triangles $\{d,e,f\}$ and $\{g,h,i\}$ that lie on distinct lines, and $a \in \cl_{N'}(\{d,h\}) \cap \cl_{N'}(\{e,g\})$, $b \in
\cl_{N'}(\{d,i\}) \cap \cl_{N'}(\{f,g\})$, and $c \in \cl_{N'}(\{e,i\}) \cap \cl_{N'}(\{f,h\})$.
If $N'$ is representable over a field, then Pappus's Theorem asserts that $a,b$ and $c$ are collinear. But $r_{N'}(\{a,b,c\}) = 3$, so $N'$ is not representable.

\section{Representation over a subfield} \label{sec:representationoverasubfield}

We say that a representation $A$ of a matroid $M$ is in \emph{standard form with respect to} a basis $B$ if it has the form $A = [I \; A']$ where $I$ is an identity matrix in the columns indexed by $B$. For such a representation, we index the rows by the elements of $B$ so that $A_{bb} = 1$ for all $b \in B$. 
When $X \subseteq B$ and $Y \subseteq E(M)$, we write $A[X, Y]$ for the submatrix of $A$ in the rows of $X$ and the columns of $Y$.
For each basis $B$ of a matroid $M$, every representation of $M$ can be converted to standard form with respect to $B$ by applying row operations and permuting the columns along with their labels.

Let $N$ be a minor of a matroid $M$ such that $N = M / C \d D$ for disjoint sets $C, D \subseteq E(M)$ where $C$ is independent and $D$ is coindependent. We choose a basis $B$ of $N$ and let $B' = B \cup C$, so $B'$ is a basis of $M$.
Let $\F$ be a field and $A'$ an $\F$-representation of $M$ in standard form with respect to the basis $B'$.
Then the matrix $A = A'[B, E(N)]$ is an $\F$-representation of $N$ in standard form with respect to the basis $B$.
We say that $A$ is the representation of $N$ \emph{induced} by $A'$, and that $A'$ is a representation of $M$ that \emph{extends} the representation $A$ of $N$.

We call both row operations and column scaling \emph{projective transformations} and say that two representations of a matroid over a field $\F$ are \emph{projectively equivalent} if one can be obtained from the other by applying projective transformations and permuting columns (along with their labels).

A proof of the next result can be found in \cite[Theorem 3.4]{Nelson}.

\begin{theorem} \label{thm:pghasentriesinsubfield}
If $q$ is a prime power, $n \geq 3$, and $\F$ is an extension field of $\GF(q)$, then each representation of $\PG(n-1,q)$ over $\F$ is projectively equivalent
to a representation with entries in $\GF(q)$.
\end{theorem}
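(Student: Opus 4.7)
The plan is to use projective transformations to place a chosen frame of $\PG(n-1,q)$ in standard position, and then argue that the combinatorial structure of $\PG(n-1,q)$ forces every remaining column to have entries in $\GF(q)$.

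First, pick a basis $B = \{b_1, \ldots, b_n\}$ of $\PG(n-1,q)$ together with an additional element $b_0$ in general position with respect to $B$, meaning that the fundamental circuit of $b_0$ with respect to $B$ equals $B \cup \{b_0\}$. Such a $b_0$ exists because no union of $n$ proper coordinate hyperplanes covers $\PG(n-1,q)$. Put $A$ in standard form with respect to $B$; then the column of each $b_i$ is the standard basis vector $e_i$ and the column of $b_0$ has all nonzero entries. A subsequent column scaling of the columns indexed by $b_1, \ldots, b_n$ lets us assume the column of $b_0$ is $(1, \ldots, 1)^T$. All frame columns now have entries in $\GF(q)$, and no nontrivial projective transformation preserves this normalization.

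Next I would show that every other column of $A$ also has entries in $\GF(q)$. For each element $e$ outside the frame, the column $v_e$ can be realized as the unique intersection of two lines spanned by previously normalized points of $\PG(n-1,q)$, and each such intersection is a rank-$3$ linear algebra calculation producing entries that are rational functions of the inputs, hence in $\GF(q)$ whenever the inputs are. The claim needed is that from the normalized frame, every point of $\PG(n-1,q)$ is reached by finitely many such intersection constructions. This is a form of the classical coordinatization theorem for Desarguesian projective spaces: for $n \geq 3$ the frame determines the coordinate field of the space, which for $\PG(n-1,q)$ is $\GF(q)$, so every column of the normalized $A$ ends up with entries in $\GF(q)$.

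The main obstacle is making the coordinatization step rigorous --- showing that the frame together with the combinatorial incidences of $\PG(n-1,q)$ really do pin down each remaining column inside $\PG(n-1,\F)$, rather than leaving entries free to lie in some proper extension of $\GF(q)$ inside $\F$. The hypothesis $n \geq 3$ is essential here, since it guarantees Desargues's theorem and, via Pappus's theorem stated earlier in the paper, the commutativity of the coordinate field. A clean way to carry the argument out is to treat the rank-$3$ case first using Pappus, and then induct on $n$ by restricting to hyperplanes $\cl_{\PG(n-1,q)}(\{b_1, \ldots, b_{n-1}\}) \cong \PG(n-2,q)$ and handling elements off the hyperplane by projection through $b_n$.
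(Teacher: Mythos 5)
The paper itself does not prove this theorem; it cites Nelson~\cite[Theorem 3.4]{Nelson}. So there is no in-paper proof to compare against, but your sketch follows the standard route---normalize a projective frame, then invoke coordinatization of Pappian/Desarguesian spaces---and is correct in broad strokes.

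Two points to tighten. First, a small technical slip: scaling the columns indexed by $b_1,\ldots,b_n$ leaves the column of $b_0$ untouched, so that alone cannot force it to be $(1,\ldots,1)^T$. What you want is to scale row $i$ by $(A_{i,b_0})^{-1}$ (a row operation, hence a projective transformation), which normalizes $b_0$, and then rescale the columns of $b_1,\ldots,b_n$ to restore the identity on $B$. Second, and this is the real content, the coordinatization step needs to do more than produce entries that are ``rational functions of the inputs.'' One must show that the set of cross-ratios realized on a coordinate line, with the geometric addition and multiplication induced by the incidence structure of $\PG(n-1,q)$, coincides with a subfield of $\F$ under the operations of $\F$ --- this is where Pappus (for commutativity) and $n \geq 3$ (for Desargues) enter --- and then use that $\F$ has a unique subfield of order $q$, namely $\{x \in \F : x^q = x\} = \GF(q)$. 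Your inductive step also needs a second line: projecting an element $e \notin \cl(\{b_1,\ldots,b_{n-1}\})$ through $b_n$ into the hyperplane gives one line through $e$, not $e$ itself; you need to intersect it with a second line through already-coordinatized points (e.g.\ through $b_0$), with a separate case when $e$, $b_0$, $b_n$ are collinear. None of these is a fatal flaw, but they are the places a careful write-up must fill in; as it stands the proposal correctly identifies the frame normalization and the appeal to the fundamental theorem of projective geometry as the two pillars of the argument.
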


When $\F$ is an extension field of $\GF(q)$, we say that an $\F$-matrix $A$ is a \emph{scaled $\GF(q)$-matrix} if there is a $\GF(q)$-matrix obtained from $A$ by scaling rows and columns by elements of $\F^\times$.
\autoref{thm:pghasentriesinsubfield} is equivalent to the fact that for $n \geq 3$, every representation of $\PG(n-1, q)$ in standard form is a scaled $\GF(q)$-matrix. This follows from two observations: when two projectively equivalent representations of a matroid are in standard form with respect to the same basis, then one can be obtained from the other by scaling rows and columns. Also, for $n \geq 3$, $\PG(n-1,q)$ is only representable over extension fields of $\GF(q)$ (see \cite[p. 660]{Oxley}).

We will use the following theorem of Pendavingh and Van Zwam that reduces the problem of proving that a matroid $M$ with a $\PG(n-1,q)$-minor $N$ is $\GF(q)$-representable to checking minors of $M$ with at most $|E(N)| + 2$ elements.
Suppose that $N$ is a minor of an $\F$-representable matroid $M$ and $\F'$ is a subfield of $\F$. We say that $N$ \emph{confines $M$ to $\F'$} if whenever $N'$ is a minor of $M$ isomorphic to $N$, every $\F$-representation of $M$ that extends an $\F'$-representation of $N'$ is a scaled $\F'$-matrix.
Although Pendavingh and Van Zwam prove a theorem for representability over a generalization of fields called partial fields \cite[Theorem 1.4]{PendavinghVanZwam}, we state here only a specialization of it to fields.

\begin{theorem}[Pendavingh, Van Zwam, \cite{PendavinghVanZwam}] \label{thm:confinement}
If $\F'$ is a subfield of a field $\F$, $M$ and $N$ are $3$-connected matroids, and $N$ is a minor of $M$, then either
\begin{enumerate}[(i)]
 \item $N$ confines $M$ to $\F'$, or \label{out:confinement-confines}
 \item $M$ has a $3$-connected minor $M'$ such that \label{out:confinement-hassmallunconfinedminor}
 $N$ does not confine $M'$ to $\F'$ and $N$ is isomorphic to one of $M' / x$, $M' \d y$, or $M' / x \d y$ for some $x, y \in E(M')$.
\end{enumerate}
\end{theorem}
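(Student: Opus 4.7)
The plan is a minimal-counterexample argument. Suppose $N$ does not confine $M$ to $\F'$: there is an $\F$-representation $A$ of $M$ extending some $\F'$-representation of a copy $N_0 \cong N$ in $M$ such that $A$ is not a scaled $\F'$-matrix. I would pick $M$ to be such a $3$-connected matroid of minimum size among those having an $N$-minor that is not confined by $N$ to $\F'$. Writing $N_0 = M / C \d D$ with $C$ independent and $D$ coindependent, it suffices to show $|C| + |D| \leq 2$, since then $M$ itself plays the role of $M'$ in outcome (ii).

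Assuming $|C| + |D| \geq 3$, I would seek an element $e \in C \cup D$ so that contracting or deleting $e$ (as appropriate to $C$ or $D$) produces a $3$-connected matroid $M'$ that still has an $N$-minor and still fails to be confined by $N$ to $\F'$; this contradicts the minimality of $M$. The connectivity side is handled by Seymour's Splitter Theorem, which yields such a single-element reduction preserving $3$-connectivity and the $N$-minor except when $M$ is a wheel or whirl, cases that can be treated separately. For the non-confinement side, observe that $A$ failing to be a scaled $\F'$-matrix is witnessed by some cross ratio of four entries lying in $\F \setminus \F'$; this witness involves only boundedly many rows and columns, so when $|C \cup D|$ is large there is room to pick $e$ outside the witness support and inherit the bad cross ratio in the smaller representation.

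The main obstacle is reconciling these two selections when the splitter-theorem element happens to lie inside the witness support. Here one uses the fact that $N$, being $3$-connected, is essentially uniquely $\F'$-representable up to row and column scaling: by rescaling $A$ one can shift the bad cross ratio among different pairs of rows and columns, and by re-embedding a different isomorphic copy of $N$ in $M$ one gains additional flexibility. A careful bookkeeping argument — essentially a specialization of Whittle's stabilizer machinery to the subfield setting — should then guarantee that some element $e \in C \cup D$ works. I expect this tracking of the algebraic witness through the connectivity reduction to be the most technical step of the proof, and the reason the result is nontrivial despite its combinatorial-sounding statement.
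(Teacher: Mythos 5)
The paper does not prove this theorem; it is quoted verbatim (specialized from partial fields to fields) from Pendavingh and Van Zwam \cite{PendavinghVanZwam}, so there is no in-paper proof to compare against. Your high-level plan --- a minimal counterexample combined with a Splitter-Theorem-style reduction --- is indeed the right kind of skeleton, and it is in the spirit of what Pendavingh and Van Zwam actually do. But the two places where you wave your hands are precisely where the theorem lives, and the specific claims you make there are incorrect.

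First, ``this witness involves only boundedly many rows and columns'' is false in general. A matrix $A$ over $\F$ is a scaled $\F'$-matrix if and only if, for every cycle in the bipartite graph on rows and columns whose edges are the nonzero entries, the alternating product of entries along that cycle lies in $\F'$. When $A$ is totally nonzero these cycles reduce to $2\times 2$ cross ratios, but a standard-form representation $[I\;A']$ has many zeros, and the cycle space of the support graph need not be generated by $4$-cycles. The shortest ``bad'' cycle can be long, and there is no a priori bound on its length in terms of $N$; removing an element of $C\cup D$ can destroy every bad cycle even when $|C\cup D|$ is large. Second, ``by rescaling $A$ one can shift the bad cross ratio among different pairs of rows and columns'' is wrong as stated: cross ratios (and more generally cycle products) are invariant under row and column scaling, so rescaling cannot move the witness. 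What rescaling can do is move which individual \emph{entries} lie outside $\F'$, but that is a much weaker statement and does not by itself let you dodge the Splitter element. Third, the sentence ``a careful bookkeeping argument --- essentially a specialization of Whittle's stabilizer machinery --- should then guarantee that some element $e\in C\cup D$ works'' is precisely the content of the theorem; Whittle's stabilizer theory concerns uniqueness of representations over a single field, not the question of whether a representation is scaled into a subfield, and adapting it is not a routine step. Finally, a smaller logical point: you take the minimum over all $3$-connected matroids with an unconfined $N$-minor, but outcome (\ref{out:confinement-hassmallunconfinedminor}) requires $M'$ to be a minor of the given $M$; you should minimize over $3$-connected minors of $M$ containing $N$ as a minor, and you also need to justify why the minimal such $M'$ has $N$ isomorphic specifically to $M'/x$, $M'\d y$, or $M'/x\d y$ (and not, say, $M'\d x\d y$), which requires the Wheels-and-Whirls/Splitter chain structure rather than just ``within two elements.'' Pendavingh and Van Zwam handle the algebraic side via their Lift Theorem and a systematic analysis of cross ratios over partial fields, and their Confinement Theorem is a substantial result; treating it as bookkeeping underestimates the difficulty.
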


\section{The proof of Theorem \ref*{thm:mainpgtheorem}} \label{sec:proofofmainpgtheorem}

Before proving \autoref{thm:mainpgtheorem} we state a result from Ramsey theory and then a theorem of Tutte about matroid connectivity.
The first is the following corollary of the Hales-Jewett Theorem \cite{HalesJewett}; it is also a special case of the Affine Ramsey Theorem of Graham, Leeb, and Rothschild \cite{GrahamLeebRothschild}, for which a proof can be found in \cite[p. 42]{GrahamRothschildSpencer:ramseytheory}.

\begin{theorem} \label{thm:halesjewettforaffinegeometries}
For any finite field $\GF(q)$ and integers $r$ and $k$, there is an integer $n = n_{\ref{thm:halesjewettforaffinegeometries}}(q, r, k)$ so that if the elements of $\AG(n-1,q)$ are $r$-coloured, it has a monochromatic restriction isomorphic to $\AG(k-1,q)$.
\end{theorem}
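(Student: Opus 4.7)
The plan is to deduce this affine Ramsey statement from the Hales--Jewett Theorem by way of its standard multidimensional strengthening together with a direct translation between combinatorial subspaces of a cube and affine flats of $\AG(n-1,q)$.

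Hales--Jewett gives, for each $q$ and $r$, an integer $N_0 = N_0(q,r)$ such that every $r$-colouring of $[q]^{N_0}$ admits a monochromatic combinatorial line, that is, a set of the form $\{\mathbf{c} + x\,\mathbf{1}_V : x \in [q]\}$ for some nonempty $V \subseteq \{1,\ldots,N_0\}$ and some fixed $\mathbf{c}$ supported on the complement of $V$. By the familiar product/induction argument of Hales and Jewett (see \cite[Chapter 2]{GrahamRothschildSpencer:ramseytheory}), this lifts to a multidimensional version: for every $q,r,m$ there is an integer $N = N(q,r,m)$ such that every $r$-colouring of $[q]^{N}$ contains a monochromatic combinatorial $m$-subspace, namely a set $\{\mathbf{c} + \sum_{i=1}^{m} x_i \mathbf{1}_{V_i} : x_1, \ldots, x_m \in [q]\}$ for pairwise disjoint nonempty $V_1, \ldots, V_m \subseteq \{1,\ldots,N\}$ and some fixed $\mathbf{c}$ on the remaining coordinates.

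To finish, I would identify the alphabet $[q]$ with the scalars of $\GF(q)$ and the point set of $\AG(n-1,q)$ with $\GF(q)^{n-1} = [q]^{n-1}$ in such a way that affine flats correspond to cosets of linear subspaces. Under this identification, any combinatorial $(k-1)$-subspace becomes a translate of the linear span of the indicator vectors $\mathbf{1}_{V_1}, \ldots, \mathbf{1}_{V_{k-1}}$; since these vectors are nonzero and have pairwise disjoint supports they are linearly independent, so the subspace is an affine $(k-1)$-flat of cardinality $q^{k-1}$ and is therefore isomorphic as a matroid to $\AG(k-1,q)$. Setting $n = N(q,r,k-1)+1$ then suffices.

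The main obstacle is the passage from the one-dimensional Hales--Jewett Theorem to its multidimensional form; while this is standard, it requires a product-colouring induction on $m$ in which one views $[q]^{N}$ as $[q]^{N_1} \times [q]^{N_2}$, applies the $(m-1)$-dimensional case on the first factor against the colouring whose colour classes record the entire colour pattern of each fibre, and then applies one-dimensional Hales--Jewett on the second factor to merge the resulting $(m-1)$-subspace with a combinatorial line into a combinatorial $m$-subspace. The translation step and the verification that disjointly supported $0/1$ vectors span the intended affine flat are routine once the multidimensional form is in hand.
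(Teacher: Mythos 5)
Your proposal is correct and follows exactly the route the paper intends: the paper gives no proof, merely stating that the result is a corollary of the Hales--Jewett Theorem (and a special case of the Graham--Leeb--Rothschild Affine Ramsey Theorem), and your derivation via the standard multidimensional Hales--Jewett Theorem and the identification of combinatorial subspaces with affine flats is the standard way to realize that corollary. The two small verifications you flag as routine are indeed routine and correct: disjointly supported nonzero $0/1$-vectors over $\GF(q)$ are linearly independent, so a combinatorial $(k-1)$-subspace is a $(k-1)$-dimensional affine flat, and the restriction of the matroid $\AG(n-1,q)$ to a $(k-1)$-flat has $q^{k-1}$ elements and rank $k$, hence is isomorphic to $\AG(k-1,q)$.
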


The \emph{connectivity function}, $\lambda_M$, of a matroid $M$ is defined by $\lambda_M(X) = r_M(X) + r_M(E(M) \setminus X) - r(M)$ for each $X \subseteq E(M)$. For disjoint sets $S, T \subseteq E(M)$, we define $\kappa_M(S, T) = \min\{\lambda_M(A) : S \subseteq A \subseteq E(M) \setminus T\}$.
When $M$ is a $3$-connected matroid and $S$ and $T$ are disjoint subsets of $E(M)$, both of size at least two, then $\kappa_M(S, T) \geq 2$.
The \emph{local connectivity} of sets $S$ and $T$ in a matroid $M$ is $\sqcap_M(S, T) = r_M(S) + r_M(T) - r_M(S \cup T)$. 

\begin{theorem}[Tutte's Linking Theorem, \cite{Tutte:mengerstheoremformatroids}] \label{thm:tutteslinkingtheorem}
If $M$ is a matroid and $S, T \subseteq E(M)$ are disjoint, then $\kappa_M(S, T) = \max\{ \sqcap_{M/Z}(S,T) : Z \subseteq E(M) \setminus (S \cup T) \}$.
\end{theorem}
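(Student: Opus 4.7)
The plan is to prove the two inequalities separately. For the easy direction $\sqcap_{M/Z}(S,T) \leq \kappa_M(S,T)$, I would first establish, for any matroid $N$, that $\sqcap_N(S,T) \leq \lambda_N(A)$ whenever $S \subseteq A \subseteq E(N) \setminus T$; this follows by applying submodularity of $r_N$ to the pairs $\{A, S \cup T\}$ and $\{A \cup T, E(N) \setminus A\}$ and adding. Next I would show $\lambda_{M/Z}(A \setminus Z) \leq \lambda_M(A)$ for every such $A$ and every $Z \subseteq E(M) \setminus (S \cup T)$, by writing both sides in terms of ranks in $M$ and applying submodularity to $\{A, Z\}$ and $\{E(M) \setminus A, Z\}$ together with subadditivity of $r_M$ on the disjoint decomposition $Z = (A \cap Z) \cup (Z \setminus A)$. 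Composing these two facts gives $\sqcap_{M/Z}(S,T) \leq \kappa_{M/Z}(S,T) \leq \kappa_M(S,T)$ for every admissible $Z$.

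For the hard direction I would induct on $|E(M) \setminus (S \cup T)|$. When $E(M) = S \cup T$ the only admissible $A$ in the definition of $\kappa_M$ is $S$ itself, and choosing $Z = \emptyset$ gives equality. For the inductive step, pick $e \in E(M) \setminus (S \cup T)$ and establish the key claim that at least one of $\kappa_{M/e}(S,T)$ and $\kappa_{M \setminus e}(S,T)$ is equal to $\kappa_M(S,T)$. Granting the claim, apply induction to whichever of $M/e$ or $M \setminus e$ preserves $\kappa$ to obtain $Z' \subseteq E(M) \setminus (S \cup T \cup \{e\})$ whose corresponding $\sqcap$ attains $\kappa_M(S,T)$; then take $Z = Z' \cup \{e\}$ in the contraction case, or $Z = Z'$ in the deletion case, using that $\sqcap_N(S,T)$ is unchanged by deleting an element outside $S \cup T$.

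The main obstacle is the key claim, which I would handle by contradiction. Write $k = \kappa_M(S,T)$ and assume both minor connectivities are at most $k - 1$; fix witnesses $A_1, A_2 \subseteq E(M) \setminus \{e\}$ containing $S$ and disjoint from $T$ with $\lambda_{M/e}(A_1) \leq k - 1$ and $\lambda_{M \setminus e}(A_2) \leq k - 1$. Translating into ranks in $M$ gives
\[
r_M(A_1 \cup \{e\}) + r_M(E(M) \setminus A_1) \leq r(M) + k,
\]
\[
r_M(A_2) + r_M(E(M) \setminus A_2 \setminus \{e\}) \leq r(M) + k - 1.
\]
Now apply submodularity of $r_M$ to $\{A_1 \cup \{e\},\, A_2\}$ and to $\{E(M) \setminus A_1,\, E(M) \setminus A_2 \setminus \{e\}\}$; since $e \notin A_1 \cup A_2$, the intersections and unions collapse to $A_1 \cap A_2$, $A_1 \cup A_2 \cup \{e\}$, and their complements in $E(M)$. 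The four resulting rank terms regroup as $\lambda_M(A_1 \cap A_2) + \lambda_M(A_1 \cup A_2 \cup \{e\}) + 2\,r(M)$. Both $A_1 \cap A_2$ and $A_1 \cup A_2 \cup \{e\}$ contain $S$ and avoid $T$, so each contributes $\lambda_M \geq k$, yielding $2\,r(M) + 2k \leq 2\,r(M) + 2k - 1$, the contradiction that completes the induction. The loop and coloop cases for $e$ reduce immediately to a smaller instance.
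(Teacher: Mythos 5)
The paper does not prove Tutte's Linking Theorem; it cites it directly from Tutte's 1965 paper, so there is no ``paper's own proof'' to compare against. Your argument is a correct, self-contained proof by the standard deletion--contraction induction, which is essentially how the result is usually established.

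All steps check out. For the easy direction, the two submodularity applications correctly give $\sqcap_N(S,T)\le\lambda_N(A)$ (so $\sqcap_N(S,T)\le\kappa_N(S,T)$), and the second pair of submodularity applications plus subadditivity correctly yield $\lambda_{M/Z}(A\setminus Z)\le\lambda_M(A)$, hence $\kappa_{M/Z}(S,T)\le\kappa_M(S,T)$. For the hard direction, the base case $E(M)=S\cup T$ forces $A=S$ and $Z=\emptyset$, giving $\kappa_M(S,T)=\lambda_M(S)=\sqcap_M(S,T)$. Your translation of $\lambda_{M/e}(A_1)\le k-1$ and $\lambda_{M\setminus e}(A_2)\le k-1$ into rank inequalities in $M$ correctly uses $r_M(e)=1$ and $r(M\setminus e)=r(M)$, so the reduction of the loop and coloop cases is needed exactly where you flag it; in either case $M/e=M\setminus e$ and $\kappa$ is preserved, so the induction still goes through. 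The uncrossing step regroups cleanly (using $e\notin A_1\cup A_2\cup T$) into $\lambda_M(A_1\cap A_2)+\lambda_M(A_1\cup A_2\cup\{e\})+2r(M)\ge 2r(M)+2k$, contradicting the upper bound $2r(M)+2k-1$. Finally, the observation that $\sqcap$ is invariant under deleting elements outside $S\cup T$ justifies taking $Z=Z'$ in the deletion branch.

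One presentational nit: in the easy direction you write ``$\sqcap_{M/Z}(S,T)\le\kappa_{M/Z}(S,T)\le\kappa_M(S,T)$''; the first inequality is the $N=M/Z$ instance of your first lemma, which is implicit but worth saying, since the lemma as stated bounds $\sqcap$ by an arbitrary $\lambda_N(A)$, and you are taking the minimum over $A$.
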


Two sets $S$ and $T$ in a matroid $M$ are called \emph{skew} if $\sqcap_M(S,T) = 0$. If we choose the set $Z$ that attains the maximum in \autoref{thm:tutteslinkingtheorem} to be minimal, then $Z$ and $S$ are skew, and $Z$ and $T$ are skew.
We can now prove \autoref{thm:mainpgtheorem}.

\begin{proof}[Proof of \autoref{thm:mainpgtheorem}]
We set $n$ to be the integer $n_{\ref{thm:halesjewettforaffinegeometries}}(q, q^2, 3)$ given by \autoref{thm:halesjewettforaffinegeometries} such that any $q^2$-colouring of the elements $\AG(n-1, q)$ has a monochromatic restriction isomorphic to $\AG(3, q)$.
We let $M$ be a $3$-connected, representable matroid with a $\PG(n-1, q)$-minor but no $U_{2, q^2+1}$-minor. Then $M$ is representable over an extension field $\F$ of $\GF(q)$.
We start with two short claims; we omit the easy proof of the first.

\begin{claim} \label{clm:mainpgtheorem-getlongline}
 If $P$ is a simple rank-$3$ matroid with an element $e$ such that $P \d e \cong \PG(2,q)$, then $P$ has a $U_{2, q^2+1}$-minor.
\end{claim}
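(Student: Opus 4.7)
The plan is to show that $\si(P/e)$ is isomorphic to either $U_{2,\,q^2+q+1}$ or $U_{2,\,q^2+1}$; either outcome immediately gives a $U_{2,\,q^2+1}$-minor of $P$ (by restriction in the first case). The strategy is to analyze the parallel classes of $P/e$ in terms of which lines of $\PG(2,q) = P \setminus e$ extend (in $P$) to contain $e$.

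The first step is a key observation: $e$ lies in the $P$-closure of at most one line of $\PG(2,q)$. Suppose for contradiction that $e \in \cl_P(M_1) \cap \cl_P(M_2)$ for two distinct lines $M_1, M_2$ of $\PG(2,q)$, and let $g$ be their unique common point. Then $\cl_P(M_1)$ and $\cl_P(M_2)$ are rank-$2$ flats of $P$ both containing the pair $\{e, g\}$, so they must coincide with $\cl_P(\{e,g\})$. This single rank-$2$ flat then contains $M_1 \cup M_2$, which spans $\PG(2,q)$ and therefore has rank $3$ in $P$, a contradiction.

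Given this, the remainder is a short case analysis. For distinct $f, g \in E(\PG(2,q))$, the pair $f, g$ is parallel in $P/e$ if and only if $e \in \cl_P(\{f,g\})$; equivalently, if and only if $e$ lies in the $P$-closure of the unique line of $\PG(2,q)$ through $f$ and $g$. If no line of $\PG(2,q)$ has $e$ in its $P$-closure, then no two points of $\PG(2,q)$ are parallel in $P/e$, so $\si(P/e) \cong U_{2,\,q^2+q+1}$. If exactly one line $M$ of $\PG(2,q)$ has $e \in \cl_P(M)$, then the $q+1$ points of $M$ form a single parallel class of $P/e$ while each of the remaining $q^2$ points of $\PG(2,q) \setminus M$ forms a singleton class, yielding $\si(P/e) \cong U_{2,\,q^2+1}$.

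There is no real obstacle: the only piece of genuine content is the at-most-one-line observation, which reduces to the fact that a rank-$2$ flat in a simple matroid is determined by any two of its points. Everything else is routine bookkeeping of parallel classes after contracting $e$.
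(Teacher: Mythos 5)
The paper omits the proof of this claim, remarking only that it is ``easy,'' so there is no argument in the paper to compare against. Your proof is correct and complete: the key observation---that $e$ lies in the $P$-closure of at most one line of $P \setminus e$, because two such lines would give two rank-$2$ flats of $P$ each containing $\{e, g\}$ (where $g$ is their common point) and hence forced to coincide while jointly spanning rank $3$---is exactly the right lever, and the ensuing count of parallel classes in $P/e$ (either $q^2+q+1$ or $1 + q^2$) is a clean way to finish. The only slightly more economical variant would be to observe directly that at most $q+1$ of the $q^2+q+1$ points of $\PG(2,q)$ can become identified in $P/e$, so $\si(P/e)$ is a line with at least $q^2+1$ points; but this is the same argument with less bookkeeping, not a genuinely different route.
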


\begin{claim} \label{clm:mainpgtheorem-extension}
 If $P$ is an $\F$-representable matroid with an element $y$ such that $P \d y \cong \PG(n-1,q)$ but $\PG(n-1,q)$ does not confine $P$ to $\GF(q)$, then $P$ has a $U_{2, q^2+1}$-minor.
\end{claim}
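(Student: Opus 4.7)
The plan is to reduce to Claim~\ref{clm:mainpgtheorem-getlongline} by passing to a rank-3 minor of $P$ whose simplification is $\PG(2,q)$ extended by one extra point. The non-confinement hypothesis, combined with Theorem~\ref{thm:pghasentriesinsubfield}, should force this extra point to have a coordinate ratio outside $\GF(q)$ in a suitable representation, and Claim~\ref{clm:mainpgtheorem-getlongline} then immediately yields a $U_{2, q^2+1}$-minor.

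First I would dispose of the coloop case. If $y$ is a coloop of $P$, then in standard form with $y$ in the basis the $y$-column is a standard basis vector, so every $\F$-representation of $P$ extending a $\GF(q)$-representation of $P \setminus y$ is automatically a $\GF(q)$-matrix. A short argument (using that $\PG(n-1,q)$ has no coloops) shows that $P \setminus y$ is the only minor of $P$ isomorphic to $\PG(n-1,q)$ in this case, so confinement would hold, contradicting the hypothesis. Thus $r(P) = n$, and any minor of $P$ isomorphic to $\PG(n-1,q)$ has the form $P \setminus x$. After relabeling I may take the witness to be $N' = P \setminus y$ itself, with an $\F$-representation $A = [I_B \mid C \mid v]$ of $P$ in standard form with respect to a basis $B$ of $N'$, where $[I_B \mid C]$ is a $\GF(q)$-matrix representing $N'$ (Theorem~\ref{thm:pghasentriesinsubfield}), $v \in \F^B$ is the $y$-column, and $A$ is not a scaled $\GF(q)$-matrix.

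The key step is that $v$ is not an $\F$-scalar multiple of any $\GF(q)$-vector, because otherwise scaling the $y$-column alone would turn $A$ into a $\GF(q)$-matrix. Consequently there exist $b_1, b_2 \in B$ with $v_{b_1} \neq 0$ and $v_{b_2}/v_{b_1} \notin \GF(q)$. Since $n \geq 3$, I pick any third basis element $b_3 \in B \setminus \{b_1, b_2\}$ and set $X = B \setminus \{b_1, b_2, b_3\}$. Restricting $A$ to rows $b_1, b_2, b_3$ gives the representation of $P/X$ induced by $A$; in particular the $y$-column becomes $(v_{b_1}, v_{b_2}, v_{b_3})^T$, which is nonzero and not proportional to any vector in $\GF(q)^3$. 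Hence $y$ is a new simple point of $\si(P/X)$, and so $\si(P/X)$ is a simple rank-3 matroid with $\si(P/X) \setminus y \cong \si(N'/X) \cong \PG(2,q)$. Claim~\ref{clm:mainpgtheorem-getlongline} then yields a $U_{2, q^2+1}$-minor of $\si(P/X)$, and therefore of $P$.

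The main obstacle is justifying rigorously that non-confinement really forces $v$ to be non-proportional to every $\GF(q)$-vector: the definition of scaled $\GF(q)$-matrix permits arbitrary independent row and column scalings, which could in principle interact with the $\GF(q)$-block $[I_B \mid C]$ to move $v$ into $\GF(q)^B$. Using a column of $C$ with all entries in $\GF(q)^\times$ (for instance the sum of the identity columns, a valid column of $\PG(n-1,q)$ for $n \geq 2$), one checks that any row scaling preserving the $\GF(q)$-structure of $[I_B \mid C]$ differs from a $\GF(q)^\times$-scaling by only a single common factor in $\F^\times$, and so cannot affect whether $v$ is proportional to a $\GF(q)$-vector. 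This reduces the whole verification to the clean scalar-multiplication check above.
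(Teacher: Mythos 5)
Your proof is correct and follows essentially the same approach as the paper's: reduce to the witness $N = P \setminus y$, locate two basis rows $b_1,b_2$ for which $v_{b_2}/v_{b_1} \notin \GF(q)$, adjoin an arbitrary third $b_3$, contract $B \setminus \{b_1,b_2,b_3\}$, and invoke Claim~\ref{clm:mainpgtheorem-getlongline}. Your final paragraph is unnecessary, though: the only implication the argument needs is that $A$ not being a scaled $\GF(q)$-matrix forces $v$ to be non-proportional to every $\GF(q)$-vector, and you already established exactly that by the clean contrapositive (if $v$ were proportional, scaling the $y$-column alone would produce a $\GF(q)$-matrix); the all-ones-column reasoning proves the converse implication, which is true but not used.
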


There is a $\PG(n-1,q)$-minor $N$ of $P$ and an $\F$-representation $A$ of $P$, in standard form with respect to a basis $B$ of $N$, that extends a $\GF(q)$-representation of $N$ but is not a scaled $\GF(q)$-matrix.
The column of $y$ is not parallel to a vector over $\GF(q)$ so there are two elements $a, b \in B$ such that $A_{ay}^{-1}A_{by} \not\in \GF(q)$. We pick any third element $c \in B$, and let $P' = M / (B \setminus \{a, b, c\})$. Then $y$ is not in a parallel pair of $P'$ and $\si(P') \d y \cong \PG(2, q)$, so (\ref{clm:mainpgtheorem-extension}) follows from (\ref{clm:mainpgtheorem-getlongline}).
\\

We apply \autoref{thm:confinement} to $M$ with $N = \PG(n-1,q)$ and $\F' = \GF(q)$. If outcome (\ref{out:confinement-confines}) of this theorem holds, then it follows from \autoref{thm:pghasentriesinsubfield} that $M$ is $\GF(q)$-representable. So we may assume that outcome (\ref{out:confinement-hassmallunconfinedminor}) of \autoref{thm:confinement} holds, and there is a $3$-connected minor $M'$ of $M$ such that $\PG(n-1,q)$ does not confine $M'$ to $\GF(q)$ and $\PG(n-1,q)$ is isomorphic to either $M' / x$, $M' \d y$, or $M' / x \d y$ for some $x, y \in E(M')$.
By (\ref{clm:mainpgtheorem-extension}) we may assume that $M$ has a $\PG(n-1,q)$-minor $N$ equal to either $M' / x$ or $M' / x \d y$ for some $x, y \in E(M')$.

We let $B$ be a basis of $N$ and $A$ be an $\F$-representation of $M'$ in standard form with respect to the basis $B \cup \{x\}$ of $M'$. 
Since $\PG(n-1,q)$ does not confine $M'$ to $\GF(q)$, we may assume that $A$ is not a scaled $\GF(q)$-matrix but it induces a $\GF(q)$-representation $A[B, E(N)]$ of $N$.
Moreover, when $N \cong M' / x \d y$, applying (\ref{clm:mainpgtheorem-extension}) to $M' / x$ lets us assume that $\PG(n-1, q)$ confines $M' / x$ to $\GF(q)$ and that the induced representation $A[B, E(N) \cup \{y\}]$ of $M' / x$ also has all its entries in $\GF(q)$.

\begin{claim} \label{clm:mainpgtheorem-getpair}
 There are two elements $f, g \in E(M' / x)$ such that $A_{xf}, A_{xg} \neq 0$, $A_{xf}^{-1}A_{xg} \not\in \GF(q)$, and $\{f, g\}$ is independent in $M' / x$.
\end{claim}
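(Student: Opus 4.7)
The plan is first to extract a pair of non-basis columns in $E(M'/x)$ whose row-$x$ entries have ratio outside $\GF(q)$, and then to arrange for this pair to be independent in $M'/x$, using the $3$-connectedness of $M'$ to rule out a degenerate configuration. For the first step, recall from the preceding setup that $A[B, E(M'/x)]$ has all entries in $\GF(q)$ (directly when $N = M'/x$, and by the application of (\ref{clm:mainpgtheorem-extension}) to $M'/x$ when $N = M'/x \d y$). The column of $x$ in $A$ is the standard basis vector $e_x$, so the only entries of $A$ which can fail to lie in $\GF(q)$ sit in row $x$ at the columns of $S := \{e \in E(M'/x) \setminus B : A_{xe} \neq 0\}$. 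If every pair of entries indexed by $S$ had ratio in $\GF(q)^\times$, we could choose $\alpha \in \F^\times$ so that scaling row $x$ by $\alpha$ and column $x$ by $\alpha^{-1}$ (which preserves the identity block on $B \cup \{x\}$) turns $A$ into a $\GF(q)$-matrix, contradicting the hypothesis that $A$ is not a scaled $\GF(q)$-matrix. Hence there exist $f_0, g_0 \in S$ with $A_{xf_0}^{-1}A_{xg_0} \notin \GF(q)$, and in particular $|S| \geq 2$.

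Next I would upgrade $(f_0, g_0)$ to an independent pair in $M'/x$. If $N = M'/x$, then $M'/x \cong \PG(n-1, q)$ is simple and any pair of distinct elements is independent, so $(f_0, g_0)$ works. Otherwise $N = M'/x \d y$; since $M'$ is $3$-connected with $|E(M')| \geq 5$, it has no loops or parallel pairs, so $y$ is not a loop of $M'/x$, and $M'/x = N + y$ where the only possible dependent pair in $M'/x$ is a parallel pair $\{y, z\}$ for at most one $z \in E(N)$. If no such $z$ exists, or if $\{f_0, g_0\} \neq \{y, z\}$, we are done. So assume $\{y, z\}$ is a parallel pair of $M'/x$ and $\{f_0, g_0\} = \{y, z\}$; in particular $\{x, y, z\}$ is a triangle of $M'$. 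When $|S| \geq 3$, I would pick $w \in S \setminus \{y, z\}$: since $A_{xy}/A_{xz} \notin \GF(q)$, at least one of $A_{xw}/A_{xy}$ and $A_{xw}/A_{xz}$ lies outside $\GF(q)$ (otherwise their quotient, which equals $A_{xy}/A_{xz}$, would lie in $\GF(q)$), and the corresponding pair $\{w, y\}$ or $\{w, z\}$ avoids the parallel pair $\{y, z\}$ and so is independent in $M'/x$.

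The main obstacle is the remaining case $|S| = 2$, i.e., $S = \{y, z\}$, which I would rule out using $3$-connectedness of $M'$. Since $B \cup \{x\}$ is a basis of $M'$ and $|B| = r(M') - 1$, the flat $H := \cl_{M'}(B)$ is a hyperplane of $M'$, and $E(M') \setminus H = \{e \in E(M') : A_{xe} \neq 0\} = \{x\} \cup S = \{x, y, z\}$, so $\{x, y, z\}$ is a $3$-element cocircuit of $M'$. Combined with $\{x, y, z\}$ being a triangle, this gives $\lambda_{M'}(\{x, y, z\}) = r_{M'}(\{x, y, z\}) + r_{M'}(E(M') \setminus \{x, y, z\}) - r(M') = 2 + (r(M') - 1) - r(M') = 1$; since $|E(M') \setminus \{x, y, z\}| \geq 2$, this is a $2$-separation contradicting the $3$-connectedness of $M'$. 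Hence $|S| = 2$ is impossible and the claim follows.
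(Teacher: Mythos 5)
Your proof is correct and follows essentially the same approach as the paper: extract a bad-ratio pair using the fact that $A$ is not a scaled $\GF(q)$-matrix, note that simplicity of $N$ handles most pairs, and rule out the remaining degenerate configuration (row $x$ supported only on $\{x,y,z\}$ with $\{x,y,z\}$ a triangle) by observing that it forces $\{x,y,z\}$ to be a circuit-cocircuit, violating $3$-connectedness. The only cosmetic difference is in bookkeeping: the paper first assumes all ratios among $E(N)$-columns lie in $\GF(q)$ and then isolates $y$ as the culprit, whereas you extract a bad pair $\{f_0,g_0\}$ directly and repair it if it happens to be the unique parallel pair; both reach the same final contradiction.
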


Let $f$ and $g$ be any two distinct elements of $E(N)$ with $A_{xf}, A_{xg} \neq 0$. Then $\{f, g\}$ is independent in $M' / x$ because $N$ is simple. Therefore, we may assume that $A_{xf}^{-1} A_{xg} \in \GF(q)$ for every pair $f, g \in E(N)$ with $A_{xf}, A_{xg} \neq 0$.
This implies that we can scale the row and column of $x$ in $A$ to transform $A[B \cup \{x\}, E(N) \cup \{x\}]$ into a $\GF(q)$-matrix. But $A$ is not a scaled $\GF(q)$-matrix, so we may assume that we are in the case where $N = M' / x \d y$, that $A_{xy} \neq 0$, and that for any $f \in E(N)$ with $A_{xf} \neq 0$, we have $A_{xf}^{-1} A_{xy} \not\in \GF(q)$.

Note that $y$ is not a loop in $M' / x$ because $M'$ is $3$-connected.
If there exist two distinct elements $f, f' \in E(N)$ with $A_{xf} \neq 0$ and $A_{xf'} \neq 0$, then the fact that $N$ is simple means that at least one of $\{f, y\}$ and $\{f', y\}$ is independent in $M' / x$, and we are done. 
On the other hand, there is at least one element $f \in E(N)$ with $A_{xf} \neq 0$ for otherwise $A$ would be a scaled $\GF(q)$-matrix.
So we may assume that there is precisely one element $f$ of $E(N)$ with $A_{xf} \neq 0$, and that $\{f, y\}$ is a parallel pair of $M' / x$. Now $\{f, x, y\}$ is both a circuit and a cocircuit of $M'$. Hence $\lambda_{M'}(\{f, x, y\}) = 1$, contradicting the fact that $M'$ is $3$-connected. This proves (\ref{clm:mainpgtheorem-getpair}).
\\

We choose a pair of elements $f, g \in E(M' / x)$ as in (\ref{clm:mainpgtheorem-getpair}), and by scaling we may assume that $A_{xf} = 1$ and $A_{xg} = \omega$ for some $\omega \not\in \GF(q)$.
We choose some hyperplane $H$ of $M' / x$ that contains $\{f, g\}$ and choose an element $z \in E(M'/x) \setminus H$. We let $B'$ be the union of $\{z\}$ with a basis of $H$ in $M' / x$, so $B' \cup \{x\}$ is a basis of $M'$, and we let $A'$ be a representation of $M'$ in standard form with respect to $B' \cup \{x\}$. We can obtain $A'$ from $A$ by row operations without using the row of $x$, so that $A'[B', E(M')]$ has all its entries in $\GF(q)$.
We let $C = E(M' / x) \setminus H$, so $C$ is a cocircuit of $M' / x$ containing $z$. Then the restriction $(M' / x) | C$ is isomorphic to $\AG(n-1, q)$. For each $e \in E(M' / x)$, the entry $A'_{ze}$ is non-zero if and only if $e \in C$, and by scaling columns of $A'$ we may assume that all entries in the row of $z$ are either $0$ or $1$.
The submatrix $A'[\{x, z\}, C]$ represents $(M' / (B' \setminus \{z\})) | C$, which has rank two. If this matrix contains a set of at least $q^2+1$ pairwise non-parallel columns, then $M'$, and hence $M$, has a $U_{2, q^2+1}$-minor. Otherwise, since $A'_{ze} = 1$ for all $e \in C$, there are at most $q^2$ distinct elements of $\F$ that appear in $A'[\{x\}, C]$.
We can therefore $q^2$-colour the elements of $(M' / x) | C$ by assigning to each $e \in C$ the colour $A'_{xe}$. Since $(M' / x) | C \cong \AG(n-1, q)$, with our choice of $n = n_{\ref{thm:halesjewettforaffinegeometries}}(q, q^2, 3)$ \autoref{thm:halesjewettforaffinegeometries} implies that there is a monochromatic restriction of $(M' / x) | C$ isomorphic to $\AG(3, q)$. We denote by $Y$ the ground set of this restriction. The entries $A'_{xe}$ for $e \in Y$ are all equal to some $\beta \in \F$, so $A'[\{x\}, Y]$ is a multiple of $A'[\{z\}, Y]$ (possibly the zero multiple) and $M' | Y$ is also isomorphic to $\AG(3, q)$.
Since $f, g \not\in C$, $A'_{zf} = A'_{zg} = 0$, so the row space of $A'$ contains a vector $u \in \F^{E(M')}$ such that $u_e = -\beta$ for all $e \in Y$ and $u_f = u_g = 0$.

As $N$ is $3$-connected, $\kappa_N(\{f, g\}, Y) = 2$. Also, when $N = M' / x \d y$, $\kappa_{M' / x}(\{f, g\}, Y) = 2$ because $y$ is parallel to an element of $N$ in $M' / x$.
By \autoref{thm:tutteslinkingtheorem}, there is a set $Z \subseteq E(M' / x)$ disjoint from $Y$ and $\{f, g\}$ such that $\sqcap_{(M' / x) / Z}(Y, \{f, g\}) = 2$, and $Z$ and $Y$ are skew.
This means that $\{f, g\}$ is independent in $(M' / x) / Z$ and $f, g \in \cl_{(M' / x) / Z}(Y)$.
Since $Z$ and $Y$ are skew, there exists a basis $B''$ of $M' / x$ that contains $Z$ and a basis of $Y$. We apply row operations to $A'$ to get a representation $A''$ of $M'$ in standard form with respect to the basis $B'' \cup \{x\}$. The row of $x$ is the same in $A''$ and $A'$, and the vector $u$ is also in the row space of $A''$.

Consider the matrix $D$ obtained from $A''$ by adding the vector $u$ to the row of $x$ then restricting to the submatrix in rows $\{x\} \cup (B'' \cap Y)$ and columns $Y \cup \{f, g\}$. Then $D$ represents $M'' = (M' / (B'' \setminus Y)) | (Y \cup \{f, g\})$ and it has the form
\[ D = \bordermatrix{& Y & f & g \cr & 0 & 1 & \omega \cr & D_1 & \alpha & \alpha'}, \]
where $D_1$ is a $\GF(q)$-representation of $\AG(3, q)$ and $\alpha$ and $\alpha'$ are columns with all entries in $\GF(q)$.
Since $\{f, g\}$ is independent and contained in the closure of $Y$ in $(M' / x) / Z$, the vectors $\alpha$ and $\alpha'$ are both non-zero and are not parallel to each other.
The minor $M'' / f$ has the following representation
\[ \bordermatrix{& Y & g \cr & D_1 & \alpha' - \omega\alpha}. \]
Since $\omega \not\in \GF(q)$ and $\alpha$ and $\alpha'$ are both non-zero and are not parallel, the column $\alpha' - \omega\alpha$ is not parallel to a vector over $\GF(q)$.
We have $M'' / f \d g \cong \AG(3, q)$. 
Suppose there are two distinct lines $L_1$ and $L_2$ of $M'' / f \d g$ such that $g \in \cl_{M''/f}(L_1) \cap \cl_{M''/f}(L_2)$. Then there is a $\GF(q)$-representation of a matroid isomorphic to $\PG(3, q)$ of the form $(D_1 \; D_2)$ for some matrix $D_2$, and as $L_1 \cup L_2$ has rank three, there is a unique element indexing a column of $(D_1 \; D_2)$ that is in the closure of both $L_1$ and $L_2$. This column is parallel to $\alpha' - \omega\alpha$, contradicting the fact that it is not parallel to a vector over $\GF(q)$.
So there is at most one line $L$ of $M'' / f \d g$ such that $g \in \cl_{M''/f}(L)$, and there exists an element $e$ of $M'' / f \d g$ that is not in any such line, so $\cl_{M''/f}(\{e, g\}) = \{e, g\}$.
Therefore, $g$ is not in a parallel pair of $M'' / f, e$. Since $\si(M'' / f, e) \d g \cong \PG(2, q)$, it follows from (\ref{clm:mainpgtheorem-getlongline}) that $\si(M'' / f, e)$, and hence $M$, has a $U_{2, q^2+1}$-minor.
\end{proof}

\end{document}